\documentclass[final,11pt,times]{elsarticle}
\usepackage{titlesec}
\titleformat{\paragraph}[block]{\normalfont\normalsize\bfseries}{\theparagraph}{1em}{}
\titlespacing*{\paragraph}{0pt}{1ex plus 0.5ex minus .2ex}{1em}

\emergencystretch=\maxdimen
\usepackage{algorithm}
\usepackage{algpseudocode}
\usepackage{amsmath,amssymb}
\usepackage{amsthm}

\newtheoremstyle{myplain} 
  {6pt} 
  {6pt} 
  {\itshape} 
  {} 
  {\bfseries} 
  {} 
  {0.5em} 
  {\thmname{#1}\ \thmnumber{#2}\ \thmnote{ (#3)}\newline} 

\theoremstyle{myplain}
\newtheorem{theorem}{Theorem}[section]

\newtheorem{corollary}[theorem]{Corollary}

\theoremstyle{remark}

\theoremstyle{definition}

\newtheorem{assumption}{Assumption}[section]

\usepackage{geometry}
\usepackage{fancyhdr}
\usepackage[table]{xcolor}
\usepackage{booktabs}
\usepackage{colortbl}
\usepackage{threeparttable}
\usepackage{xcolor}
\newcommand{\green}[1]{\textcolor{green}{#1}}

\usepackage{multirow}

\journal{CIE53 Proceedings}

\usepackage[colorlinks=true, pdfauthor={author}]{hyperref}

\begin{document}

\begin{frontmatter}


\title{A simulation-optimization approach for fractional, profitability-oriented inventory control under service-level type constraints}


\author[]{Tianxiao Sun, Ph.D\corref{cor}}
\ead{tianx.sun@gmail.com}
\cortext[cor]{Corresponding author} 
\author[]{Noah Schwarzkopf}
\ead{noahschwarzkopf98@gmail.com}
\address{Independent Researcher, Charlotte, NC, USA}
\begin{abstract}
Managing stock efficiently remains a core issue in modern logistics, where companies must reconcile cost efficiency with dependable service despite unpredictable market conditions. Conventional models often overlook the direct connection between investment in inventory and overall financial performance. This study introduces a data-driven decision framework that combines stochastic simulations with a profit-oriented optimization routine to enhance decision-making under uncertainty. The simulation stage generates performance estimates across multiple operating scenarios, providing realistic data on expenditures, revenues, and service reliability. These outcomes inform a fractional optimization process that searches for policies yielding the highest financial returns while maintaining required availability levels. The algorithm iteratively refines parameter values through feedback between simulated outcomes and optimization results, ensuring adaptability to dynamic enterprise systems. Computational experiments using representative business settings confirm that this approach improves both service consistency and financial yield. Overall, the framework demonstrates a practical, data-driven path for firms seeking to align operational responsiveness with sustainable profitability.
\end{abstract}

\begin{keyword}
Supply Chain Profitability \sep Inventory Optimization \sep Safety Stock \sep Stochastic Simulation \sep Fractional Programming
\end{keyword}

\end{frontmatter}
\thispagestyle{fancy}

\section{Introduction}
\label{sec:first}
Effective inventory control remains a central challenge in supply chain management, where companies must balance profitability and service performance under stochastic demand and operational uncertainty. Traditional inventory optimization models predominantly focus on cost minimization or service-level maximization, often treating financial performance as a secondary outcome. In practice, particularly in large retail and distribution environments, inventory decisions are increasingly evaluated using profitability-oriented metrics such as Gross Margin Return on Inventory Investment (GMROI), defined as the ratio of gross margin to average inventory investment, which directly measure how effectively inventory capital is converted into gross margin \cite{gokce2002optimization}.

Despite its managerial relevance, optimizing GMROI in realistic supply chain environments is challenging for two reasons. First, GMROI is a ratio-based metric, which gives rise to a fractional optimization problem. Second, in modern enterprise systems, key performance measures such as margin, inventory investment, and service levels are typically unavailable in closed form and must be estimated through stochastic simulation. This is particularly evident in large-scale complex distribution networks with heterogeneous products and stochastic demand, such as modern distribution centers (J Sainsbury's DC, shown on Figure~\ref{figure}).

\begin{figure}[!htbp]
\centering\includegraphics[width=0.7\linewidth]{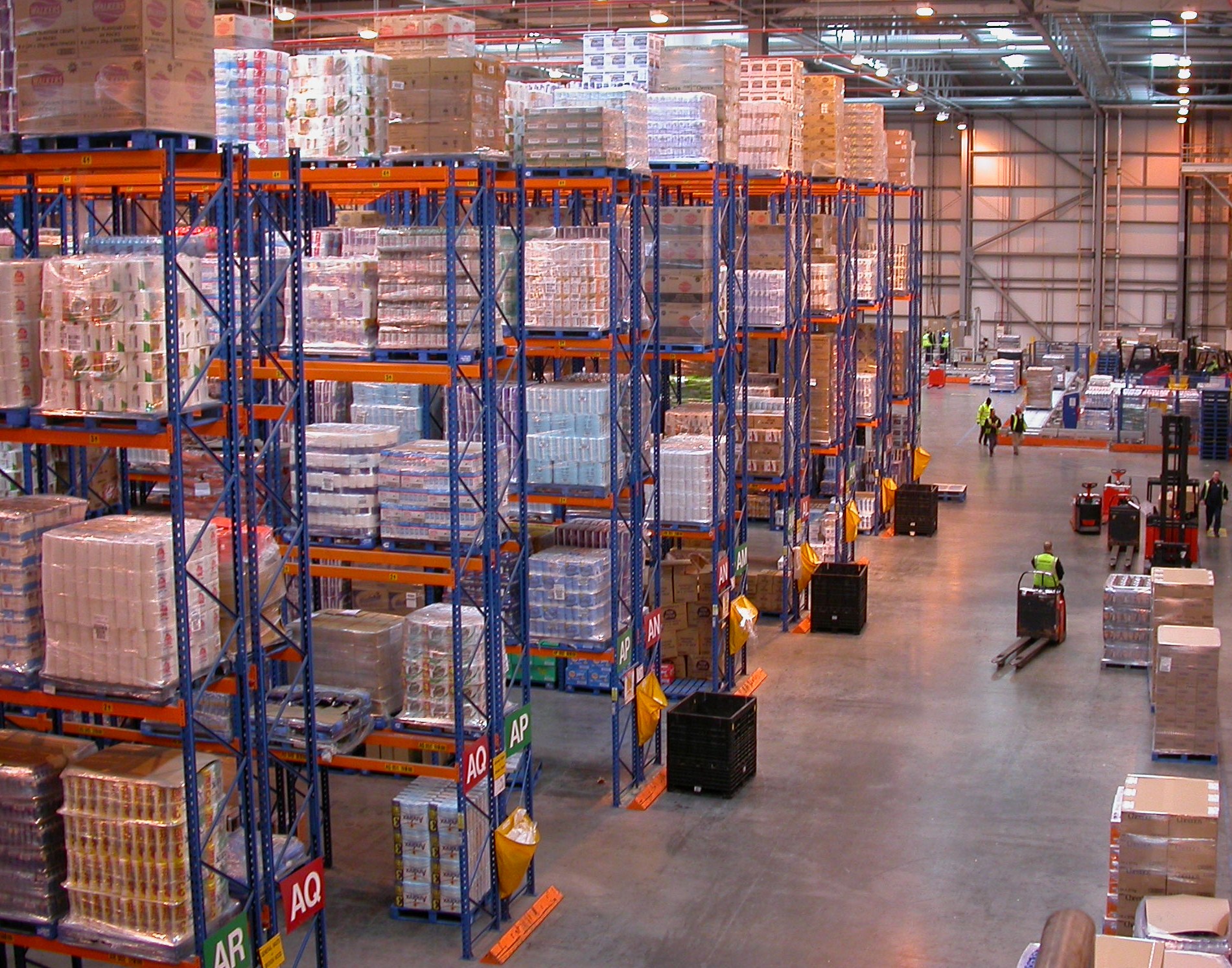}
\caption{Distribution centre (J Sainsbury's). 
Source: Nick Saltmarsh, via Wikimedia Commons, licensed under CC BY 2.0.}
\label{figure}
\end{figure}

Motivated by these challenges, this paper studies profitability-oriented inventory planning under service-level type constraints in simulation-driven enterprise environments. We consider a setting in which key performance measures are obtained through stochastic simulation and inventory decisions are evaluated using GMROI as the primary objective.

\subsection{Related Work}
Inventory optimization under uncertainty has been extensively studied in the operations research and supply chain literature. Classical analytical approaches address safety stock determination and service-level control using stochastic inventory theory, dynamic programming, robust optimization, and mixed-integer linear programming (MILP) \cite{smith2013handbook}\cite{bertsimas2006robust}. While these models provide strong theoretical guarantees, they rely on restrictive assumptions regarding demand distributions, lead times, and cost structures, limiting their applicability in large-scale, data-driven enterprise environments.

To overcome these limitations, simulation-based and hybrid simulation-optimization (sim-opt) approaches have gained increasing attention \cite{gonccalves2020operations}. Monte Carlo simulation is commonly used in practice to evaluate inventory policies under stochastic demand and to estimate nonlinear performance measures such as revenue, inventory investment, and service levels \cite{cruz2024integrated}\cite{vandeput2020inventory}. Early sim-opt studies embed heuristic or evolutionary search procedures within simulation to identify high-performing inventory policies \cite{avci2017multi}\cite{pitty2008decision}. More recent work integrates Monte Carlo simulation with modern black-box optimization techniques, such as Bayesian and hybrid optimization, to improve search efficiency under uncertainty \cite{cruz2024integrated}\cite{ogura2022bayesian}\cite{maitra2024inventory}.

Sim-opt often incurs substantial computational cost, as evaluating candidate policies typically requires repeated stochastic simulation runs when key performance measures such as service levels, revenues, and inventory investment are unavailable in closed form. This burden is particularly pronounced in large-scale enterprise settings with global coupling constraints and nonlinear performance measures. To improve scalability, metamodel-based and surrogate-assisted optimization methods have been proposed and shown to significantly reduce computational effort in safety stock optimization \cite{sharifnia2021robust}, with recent surveys confirming their growing maturity \cite{do2022metamodel}\cite{hong2021surrogate}.

Beyond computational considerations, simulation-based inventory optimization is further complicated by the presence of multiple, often conflicting objectives, such as cost efficiency and service performance. In the absence of a natural scalar objective, ranking-and-selection procedures \cite{tsai2017simulation} and Pareto-based evolutionary algorithms are commonly employed to explore trade-offs among competing objectives. While informative, such multi-objective approaches do not directly align the optimization objective with profitability-oriented managerial metrics. In particular, optimizing GMROI leads to a fractional objective that is not well represented by weighted-sum or Pareto-based formulations.

Such ratio-based objectives naturally fall within the scope of fractional programming. Foundational work by Charnes and Cooper \cite{charnes1962programming}, together with efficient solution methods such as Dinkelbach’s algorithm \cite{dinkelbach1967nonlinear}, establishes strong theoretical and algorithmic foundations for fractional optimization. Recent studies on benefit-cost and ratio optimization further underscore the importance of fractional objectives in large-scale decision-making contexts \cite{miller2022optimization}.

Overall, the literature reveals a clear gap at the intersection of simulation-based inventory optimization, profitability-oriented fractional objectives, and explicit service-level type constraints.
In particular, existing studies tend to address these dimensions in isolation, leaving the joint problem of simulation-driven performance evaluation, ratio-based profitability optimization, and service-level control largely unexplored.

\subsection{Our Contributions}
This paper makes the following contributions:
\begin{itemize}
    \item We develop a simulation-driven inventory planning framework in which Monte Carlo simulation generates scenario-based estimates of gross margin, inventory investment, and service performance for alternative inventory policies at the SKU level.
    
    \item We formulate inventory planning at the level of SKU groups (referred to as buckets) as a constrained fractional optimization problem that maximizes Gross Margin Return on Inventory Investment (GMROI) while enforcing service performance requirements expressed in terms of product availability, thereby aligning the optimization objective with profitability-oriented managerial metrics.
    
    \item We apply Dinkelbach’s algorithm~\cite{dinkelbach1967nonlinear} to the resulting discrete fractional program and show that, under the simulation-induced scenario structure, the algorithm converges in a finite number of iterations to a globally optimal solution.
    
    \item Through large-scale computational experiments using production-style data, we demonstrate that the proposed approach attains solutions that are numerically indistinguishable from the exact optimum, while delivering substantial computational advantages over standard integer optimization solvers and providing quantitative insight into service-profit trade-offs.
\end{itemize}

\section{Problem Formulation}
\label{sec:illustrations}
We consider an optimization framework defined over groups of SKUs, where each group represents a collection of products that are planned and evaluated jointly. Throughout the paper, we refer to such SKU groups as buckets. Inventory performance is evaluated through stochastic simulation at the SKU level, and optimization is performed over the resulting simulated scenarios aggregated to the group level. In this work, service performance is measured using in-stock percentage (ISP), which serves as the operational representation of service level. The objective is to maximize bucket-level GMROI subject to a minimum ISP requirement.

For each bucket, inventory and replenishment behavior is simulated at the SKU level over a future planning horizon under different safety stock settings. For a given SKU, each simulation scenario corresponds to operating the inventory system with a fixed safety stock level, where stochastic demand depletes inventory and replenishment orders are triggered according to a reorder point type policy in which the safety stock determines the reorder threshold. This process generates a characteristic sawtooth shaped inventory trajectory over time. From each simulated trajectory, gross margin, average inventory investment, and in-stock percentage are estimated at the SKU scenario level and then aggregated across SKUs to form the bucket-level GMROI objective and averaged in-stock percentage constraint used in the subsequent optimization. In detail, let
\begin{itemize}
    \item $M_{ij} \geq 0$ denote the gross margin dollars,
    \item $I_{ij} \geq 0$ denote the average inventory dollars,
    \item $S_{ij} \in [0, 1]$ denote the in-stock percentage (ISP), and
    \item $C_S \in [0, 1]$ denote the minimum required in-stock percentage goal at bucket level,
\end{itemize}
for SKU $i$, simulated scenario $j$, where $i \in \mathcal{I} = \{1, 2, \dots, n\}$ representing the SKU ID, and $j \in J_i$ is the index of the $j$th scenario for SKU $i$. Note that scenarios are dynamically chosen based on the demand and replenishment nature of SKUs.
We introduce binary decision variables
\[
x_{ij} =
\begin{cases}
1, & \text{if for SKU $i$ scenario $j$ is selected,}\\[4pt]
0, & \text{otherwise.}
\end{cases}
\]
Under a selected combination of scenarios, the resulting total gross margin is given by
\[
M({\mathbf{x}}) = \sum_{i\in\mathcal{I}}\sum_{j\in J_i} M_{ij}x_{ij},
\]
while the corresponding aggregate inventory investment is
\[
I({\mathbf{x}}) = \sum_{i\in\mathcal{I}}\sum_{j\in J_i} I_{ij}x_{ij},
\]
and the averaged in-stock percentage can be represented as
\[
S({\mathbf{x}}) = \frac{1}{n}\sum_{i\in\mathcal{I}}\sum_{j\in J_i} S_{ij}x_{ij},
\]
where $\mathbf{x}\in \mathcal{X}:=\{x_{ij} \in \{0, 1\}: \forall j\in J_i, \forall i\in \mathcal{I}\}$ lives in a heterogeneous product space with a total dimension $\sum_{i=1}^n|J_i|$. We want to solve the following optimization problem:
\begin{equation}
\tag{FP}
\label{prob:FP}
\begin{aligned}
\max_{\mathbf{x}} \quad 
& \frac{M(\mathbf{x})}{I(\mathbf{x})}  && \text{(Objective: GMROI at bucket level)} \\
\text{s.t.} \quad
& S(\mathbf{x}) \geq C_S, && \text{(Constraint: in-stock percentage goal)} \\
& \sum_{j\in J_i} x_{ij} = 1, \quad \forall i \in\mathcal{I}, && \text{(Constraint: regularization)} \\
& \mathbf{x} \in \prod_{i=1}^{n} \{0,1\}^{|J_i|}, && \text{(Feasible set: multi-dimensional binary space)}
\end{aligned}
\end{equation}

Furthermore, define the fractional objective
\[
F(\mathbf{x}) = \frac{M(\mathbf{x})}{I(\mathbf{x})}
\]
which corresponds exactly to the bucket-level GMROI; and the feasible region of the above program as
\[
\mathcal{D} = \{S(\mathbf{x})\geq C_S \text{ and } \sum_{j\in J_i} x_{ij} = 1,\ \forall i \in \mathcal{I} \mid \mathbf{x}\in \prod_{i=1}^{n} \{0,1\}^{|J_i|}\},
\]
then (\ref{prob:FP}) could be rewritten as $\max\{F(\mathbf{x}) \mid \mathbf{x}\in\mathcal{D}\}$.

With the above settings, we make the following assumptions in the rest of the paper:
\begin{assumption}
\textbf{(Positivity)} For all feasible selections $\mathbf{x} \in \mathcal{D}$, the total inventory investment satisfies $I(\mathbf{x}) > 0$, i.e., the aggregate inventory dollars are strictly positive for any feasible combination of scenarios.
\end{assumption}

\begin{assumption}
\label{feasibility}
\textbf{(Feasibility)}
The scenario sets generated in the simulation stage are constructed such that, for any prescribed in-stock threshold $C_S$, at least one feasible combination of SKU-level scenarios satisfies the aggregate service requirement.
\end{assumption}

\section{Algorithmic Framework to Optimize Constrained Fractional Binary Programming}
In this section, we introduce an iterative algorithmic framework to solve fractional programming (\ref{prob:FP}) and establish its convergence properties.

\subsection{Fractional Programming via Dinkelbach’s Algorithm}
We implement the Dinkelbach’s algorithm \cite{dinkelbach1967nonlinear} to solve the fractional programming (\ref{prob:FP}), with pseudocode provided in Algorithm~\ref{alg:dinkelbach}. Our decision variables are binary, and the feasible domain $\mathcal{D}$ is a finite set. Under this discrete setting, the convergence behavior of Dinkelbach's method exhibits finite-step termination rather than asymptotic convergence, and the solution obtained is globally optimal. This reduces the number of iterations needed to reach the optimum and offering notable efficiency gains for large-scale problem instances. A rigorous proof of finite-step termination and its theoretical bound is provided in Appendix~\ref{appendix:convergence}.

\begin{algorithm}[!htbp]
\caption{Dinkelbach's Algorithm for Maximizing GMROI}
\label{alg:dinkelbach}
\begin{algorithmic}[1]
  \State \texttt{INITIALIZE} $k \gets 0$ and $\lambda_k \gets 0$.
  \Repeat
    \State \texttt{SOLVE} the Subproblem
    \[
      W(\lambda_k) \;=\; \max_{\mathbf{x}\in \mathcal{D}}\{M(\mathbf{x}) - \lambda_k I(\mathbf{x})\},
    \]
    and denote the solution by $\mathbf{x}_k^\star$.
    \If{$W(\lambda_k) < \epsilon$}
      \State \texttt{STOP}; $\mathbf{x}_k^\star$ is the optimal solution and $\lambda^\star = \lambda_k^\star$ is the optimal ratio.
    \Else
      \State \texttt{UPDATE}
      \[
        \lambda_{k+1} \;=\; \frac{M(\mathbf{x}_k^\star)}{I(\mathbf{x}_k^\star)},\qquad k \gets k+1,
      \]
      \State \hskip\algorithmicindent and return to the \texttt{SOLVE} step.
    \EndIf
  \Until{convergence}
\end{algorithmic}
\end{algorithm}

The convergence tolerance, $\epsilon$, is defined as a small positive constant and is used to ensure numerical robustness in practice. Although Dinkelbach’s algorithm involves solving a sequence of linearized fractional subproblems in the \texttt{SOLVE} step, ensuring the computational tractability of each iteration remains essential for achieving efficient convergence in large-scale instances. In particular, since $W(\lambda)$ is strictly decreasing with a unique zero at the optimal ratio, the stopping condition $W(\lambda_k) < \epsilon$ serves as a numerically stable stopping criterion under finite-precision arithmetic.

\subsection{Binary Integer Programming subproblem}

At each iteration of Dinkelbach’s algorithm, the resulting subproblem is formulated as a \emph{binary integer linear program (BIP)}: the objective function is linear and all decision
variables are binary. The subproblem is solved exactly at each iteration.

From a computational perspective, this BIP is NP-hard, and solving it to optimality can be computationally demanding for large-scale instances with many SKUs and simulated scenarios. In our implementation, the subproblem is solved using standard integer optimization solvers through Python interfaces, including the CBC solver via \texttt{PuLP} and \texttt{OR-Tools}. We additionally tested GLPK and HiGHS through \texttt{PuLP} and SCIP through the \texttt{OR-Tools} interface; however, these alternatives exhibited inferior performance or reduced robustness on the instances considered. Consequently, CBC is adopted as the baseline exact solver for the computational experiments reported in this study.

Although modern integer solvers employ advanced branch-and-bound and cutting-plane techniques, the worst-case complexity of the subproblem remains exponential, which motivates the acceleration strategy introduced in the next subsection.

\subsection{Acceleration Strategy}
\label{subsec: acc}
To accelerate the computation of the inner subproblem $W(\lambda_k)$, we adopt a Lagrangian relaxation approach \cite{fisher1981lagrangian}, as shown in Algorithm~\ref{alg:lagrangian}. The main idea is to relax the coupling constraint $S(\mathbf{x}) \ge C_S$ by introducing a nonnegative Lagrange multiplier $\mu \ge 0$ and incorporating it into the objective function. For a fixed
$\lambda_k$, the relaxed subproblem can be written as
\begin{equation}
W_\mu(\lambda_k)
= \max_{\mathbf{x} \in \mathcal{X}}
\left\{
M(\mathbf{x}) - \lambda_k I(\mathbf{x}) - \mu \left( C_S - S(\mathbf{x}) \right)
\right\}.
\tag{1}
\end{equation}

Ignoring the constant term $-\mu C_S$ and recalling that
$
T(x) = \frac{1}{n} \sum_{i \in I} \sum_{j \in J_i} S_{ij} x_{ij},
$
the objective function can be expanded as
\begin{equation}
\max_{\mathbf{x} \in \mathcal{X}}
\sum_{i \in I} \sum_{j \in J_i}
\left(
M_{ij} - \lambda_k I_{ij} + \frac{\mu}{n} S_{ij}
\right) x_{ij}.
\tag{2}
\end{equation}

Define $W_{ij} = M_{ij} - \lambda_k I_{ij}$. Since each SKU $i$ is subject only to
the local constraint $\sum_{j \in J_i} x_{ij} = 1$, the relaxed problem is
separable across SKUs. In particular, for each SKU $i$, the optimal scenario
can be selected independently as
\begin{equation}
j^*(i)
= \arg\max_{j \in J_i}
\left(
W_{ij} + \frac{\mu}{n} S_{ij}
\right).
\tag{3}
\end{equation}
which reduces the computational complexity of the subproblem from a
binary integer program to a series of independent local decisions.

\begin{algorithm}[!htbp]
\caption{Lagrangian Relaxation for the Subproblem $W(\lambda_k)$}
\label{alg:lagrangian}
\begin{algorithmic}[1]
\Require $W_{ij} = M_{ij} - \lambda_k I_{ij}$, $S_{ij}$ for all $(i,j)$, threshold $C_S$
\Ensure Optimal solution $\mathbf{x}^*$ for the constrained Subproblem $W(\lambda_k)$

\Function{EnumerateSolve}{$\mu$}
    \For{each $i \in I$}
    \State $j^*(i) \gets
    \arg\max_{j \in J_i}
    \left( W_{ij} + \frac{\mu}{n} S_{ij} \right)$
    \EndFor
    \State \Return $\mathbf{x}(\mu) = \{ j^*(i) : i \in \mathcal{I} \}$
\EndFunction

\State \texttt{Initialize} $\mu_{\text{low}} \gets 0$, $\mu_{\text{high}} \gets 1$
\While{$S(\mathbf{x}(\mu_{\text{high}})) < C_S$}
    \State $\mathbf{x}(\mu_{\text{high}}) \gets$ \Call{EnumerateSolve}{$\mu_{\text{high}}$}
    \State $\mu_{\text{high}} \gets 2\,\mu_{\text{high}}$
\EndWhile

\While{$\mu_{\text{high}} - \mu_{\text{low}} > \varepsilon$}
    \State $\mu \gets (\mu_{\text{low}} + \mu_{\text{high}})/2$
    \State $\mathbf{x}(\mu) \gets$ \Call{EnumerateSolve}{$\mu$}
    \If{$S(\mathbf{x}(\mu)) \ge C_S$}
        \State $\mu_{\text{high}} \gets \mu$, \quad $\mathbf{x}^* \gets \mathbf{x}(\mu)$
    \Else
        \State $\mu_{\text{low}} \gets \mu$
    \EndIf
\EndWhile

\State \Return $\mathbf{x}^*,\, W(\mathbf{x}^*)$
\end{algorithmic}
\end{algorithm}

\paragraph{Optimality of the Subproblem}
The Lagrangian relaxation gives rise to a one-dimensional convex dual problem of the form
\[
\min_{\mu \ge 0} \max_{\mathbf{x} \in \mathcal{X}} \mathcal{L}(\mathbf{x},\mu).
\] Rather than solving this min--max problem explicitly, we exploit the monotonicity of the induced service level $S(\mathbf{x}(\mu))$ and recover the optimal multiplier efficiently
via binary search. For a fixed $\lambda_k$, consider the Lagrangian relaxation of the bucket-level service constraint. The associated dual function is
\[
\phi(\mu)
=
\max_{\mathbf{x} \in \mathcal{X}}
\left\{
M(\mathbf{x}) - \lambda_k I(\mathbf{x}) + \mu S(\mathbf{x})
\right\}
- \mu C_S,
\quad \mu \ge 0.
\]

For any fixed $\mu$, the inner maximization decomposes across SKUs, and the corresponding primal solution $x(\mu)$ is obtained by independently selecting $j^*(i)$ according to Algorithm~\ref{alg:lagrangian}. The dual function $\phi(\mu)$ is convex and piecewise linear in $\mu$, with subgradient
\[
\partial \phi(\mu) = \{ C_S - S(\mathbf{x}(\mu)) \}.
\]

An optimal multiplier $\mu^*$ satisfies the complementary slackness condition $\mu^*(C_S - S(\mathbf{x}^*)) = 0$, where $\mathbf{x}^* = \mathbf{x}(\mu^*)$. When the service constraint is binding, $\mu^* > 0$ and $S(\mathbf{x}^*) = C_S$; otherwise, the constraint is non-binding and $\mu^* = 0$. In either case, the optimal multiplier can be efficiently identified via one-dimensional search over $\mu$.

Although the decision variables are restricted to binary selections, the Lagrangian relaxation yields an exact solution for the relaxed subproblem. Any potential duality gap with respect to the original constrained binary integer program is discussed in the Remark of Section~\ref{remark on dual gap}.

\paragraph{Acceleration Effect}
Compared with directly solving $W(\lambda_k)$ via a general integer
programming solver (e.g., \texttt{PuLP} or \texttt{OR-Tools}), the Lagrangian relaxation exploits the separable structure of the problem. Each iteration only requires evaluating $\arg\max_{j \in J_i} \left( W_{ij} + \frac{\mu}{n} S_{ij} \right)$,  which has linear complexity in the number of SKUs. Moreover, since $\mu$ is a scalar variable, the binary search converges in $\mathcal{O}(\log(\mu_{\max}/\mu_{\min}))$ iterations. This substantially reduces the computational time while maintaining the same optimality guarantee for the subproblem.

\subsection{Degenerate Unconstrained Regime}
\label{subsec: unconstrained reg}
In practical supply chain applications, we observe that for certain SKU buckets and planning horizons, the simulated scenario sets are sufficiently rich relative to the prescribed service-level threshold. In such cases, the aggregate service constraint does not effectively restrict feasible decisions and the optimization problem degenerates into an unconstrained variant.

Formally, define
\[
\underline{S} \;=\; \frac{1}{n}\sum_{i\in I}\min_{j\in J_i} S_{ij}.
\]
When $\underline{S} \ge C_S$, the aggregate service-level constraint $S(\mathbf{x})\ge C_S$ is satisfied for all feasible selections $\mathbf{x}\in\mathcal{X}$. Consequently, the feasible region $\mathcal{D}=\{\mathbf{x}\in\mathcal{X}: S(\mathbf{x})\ge C_S\}$ coincides with $\mathcal{X}$, and the bucket-level fractional program reduces to an unconstrained GMROI maximization problem subject only to the per-SKU selection constraints. Although the fractional objective remains globally coupled across SKUs and the overall problem therefore remains jointly defined and non-decomposable, the Dinkelbach transformation resolves this coupling iteratively. As a result, each linearized subproblem $W(\lambda)=\max_{\mathbf{x}\in\mathcal{X}}\{M(\mathbf{x})-\lambda I(\mathbf{x})\}$ is separable across SKUs.

From a computational perspective, this structural simplification benefits all solution approaches. For exact integer solvers such as \texttt{PuLP} and \texttt{OR-Tools}, the removal of the aggregate service-level constraint yields tighter linear relaxations and significantly reduces branch-and-bound effort within each Dinkelbach iteration. For the proposed Lagrangian relaxation method, the uniformly non-binding regime corresponds to an optimal Lagrange multiplier $\mu^\star = 0$ derived from the complementary slackness condition. Under this condition, the relaxed subproblem reduces to a purely separable maximization across SKUs and admits a closed-form solution obtained by independently selecting, for each SKU, the scenario that maximizes $W_{ij}=M_{ij}-\lambda I_{ij}$. Algorithm~\ref{alg:unconstrained} summarizes this simplified solver, which can be interpreted as a degenerate specialization of Algorithm~\ref{alg:lagrangian}. The numerical experiments in Section~\ref{sec:numerical} explicitly report results from this regime and confirm the associated computational gains.

\begin{algorithm}[H]
\caption{Lagrangian Relaxation for the Subproblem $W(\lambda_k)$ (Unconstrained)}
\label{alg:unconstrained}
\begin{algorithmic}[1]
\Require $W_{ij}=M_{ij}-\lambda_k I_{ij}$ for all $(i,j)$
\Ensure Optimal selection $\mathbf{x}^\star$ for the unconstrained subproblem
\For{each SKU $i\in \mathcal{I}$}
    \State $j^\star(i)\leftarrow \arg\max_{j\in J_i} W_{ij}$
\EndFor
\State \Return $\mathbf{x}^\star=\{j^\star(i): i\in \mathcal{I}\}$
\end{algorithmic}
\end{algorithm}

\section{Numerical Experiments}
\label{sec:numerical}
This section evaluates the performance of the proposed bucket-level GMROI optimization framework on production-scale data. We compare three solution approaches: a binary integer programming (BIP) formulation solved via the \texttt{PuLP}
Python interface~\cite{Mitchell2011PuLPAL}, an equivalent formulation solved using \texttt{OR-Tools}~\cite{cpsatlp}, and the
proposed Lagrangian relaxation method integrated with the Dinkelbach outer loop. The experiments focus on solution quality, constraint satisfaction, and computational efficiency across multiple buckets of increasing scale.

\subsection{Experimental environment and data}
All experiments were conducted in a cloud-based Linux environment on an x86\_64 platform equipped with an 8-core 2.25 GHz processor. The purpose of the experimental setup is to evaluate solver behavior and scalability under controlled conditions, with relative performance comparisons across solution approaches as the primary focus.

The input data consist of representative synthetic inventory data calibrated to reflect the characteristics of a large-scale enterprise replenishment system. Prior to optimization, an isotonic regression procedure was applied to enforce monotonic relationships between safety stock levels and simulated performance metrics. In particular, gross margin, average inventory investment, and in-stock percentage are expected to be non-decreasing as safety stock increases; isotonic regression is used to smooth stochastic noise while preserving this structural relationship.

The experimental instances follow the bucket-level optimization framework described in Section~\ref{sec:illustrations}. For each bucket, scenario-level performance metrics are precomputed and used as inputs to the GMROI maximization problem subject to an in-stock percentage constraint.

\subsection{Buckets and solver configurations}
\label{subsec: config}
A set of representative SKU buckets was selected for evaluation in each experimental regime, spanning nearly two orders of magnitude in problem size. The smallest bucket contains 3,944 SKUs and 234,160 simulated scenarios, while the largest bucket includes 91,155 SKUs and more than five million scenarios.

For each bucket and each target regime, all solution approaches were initialized with identical input data, in-stock percentage goals, and convergence tolerances. The fractional GMROI objective was handled consistently using the Dinkelbach procedure, and exact integer solutions were obtained using \texttt{PuLP} and \texttt{OR-Tools}, while the proposed Lagrangian relaxation was applied as described in Section~\ref{subsec: acc}. This experimental design isolates the impact of problem structure on solver performance and facilitates a controlled comparison across buckets and regimes.

The ISP goals were chosen deliberately to examine both constrained and unconstrained regimes of the bucket-level optimization problem. Specifically, we considered two classes of ISP goals. First, for each bucket, we selected a potentially binding ISP goals, defined as the midpoint between the empirically observed lower and upper bounds of achievable bucket-level ISP across all feasible scenario selections. This construction ensures that the service constraint is neither uniformly non-binding nor trivially infeasible, and that the effective degree of constraint tightness is
comparable across buckets of different scales. As a result, the constrained instances reflect a consistent binding intensity, enabling a fair comparison of solver behavior and computational efficiency. Second, we additionally evaluated unconstrained instances by selecting ISP goals that lie below the achievable lower bound, rendering the aggregate service constraint uniformly non-binding. These instances serve to illustrate the degenerate regime discussed in Section~\ref{subsec: unconstrained reg}, in which the optimization problem reduces to an unconstrained GMROI maximization and all solvers benefit from structural simplification.

In addition to the baseline configurations, we conducted supplementary experiments to examine the effect of the ISP constraint tightness on computational performance. For each bucket, the ISP goal was systematically varied within the achievable range to induce different degrees of constraint binding, from weakly operative to strongly restrictive regimes. This experimental setup is designed to evaluate how solution time scales with increasing constraint severity across different solution approaches.

\subsection{Results}
This section summarizes the optimization outcomes and computational performance across all buckets and solution approaches. We first report baseline results under the primary in-stock configurations, followed by additional analyses that isolate the constrained regime and examine the effect of service-level tightness on solver runtime. Throughout this section, solution quality is assessed using the metric $\mathrm{TAR\_ERR}$, which measures the relative deviation of a realized solution from the exact GMROI optimum and is defined as
\[
\mathrm{TAR\_ERR}
=
\frac{\lvert F(\mathbf{x^{\star}_{\text{exact}}}) - F(\mathbf{x^{\star}_{\text{solver}}}) \rvert}
{\max\{1, \lvert F(\mathbf{x^{\star}_{\text{exact}}}) \rvert\}}.
\]
Here, $\mathbf{x^{\star}_{\text{exact}}}$ denotes the exact GMROI-optimal solution obtained using \texttt{PuLP}, while $\mathbf{x^{\star}_{\text{solver}}}$ denotes the solution returned by the solver under evaluation. Throughout this paper, solutions obtained from \texttt{PuLP} and \texttt{OR-Tools} are treated as exact, as both solvers consistently return numerically identical GMROI values under the prescribed convergence tolerance. When a table entry is reported as \texttt{EXACT}, the corresponding optimal value is attained at the solver level and is numerically indistinguishable from the true optimum; in such cases we observe $\mathrm{TAR\_ERR} \le 10^{-14}$.

\begin{table}[htbp]
\centering
\small
\caption{Optimization results and runtimes}
\label{tab:opt_results}

\begin{threeparttable}
\begin{tabular}{@{} l
   r r
   r r
   r r r
   @{}}
\toprule
Type\#SKU-\#SCEN

& \multicolumn{2}{c@{}}{\makebox[2.5cm][c]{\texttt{PuLP-CBC}}}
& \multicolumn{2}{c@{}}{\makebox[2.5cm][c]{\texttt{OR-Tools-CBC}}}
& \multicolumn{3}{c@{}}{\makebox[3.0cm][c]{Lagrangian Relaxation}}\\
\cmidrule(lr){2-3}\cmidrule(lr){4-5}\cmidrule(lr){6-8}
& \#ITER & t[s]
& \#ITER & t[s]
& \#ITER & t[s] & $\mathrm{TAR\_ERR}$\\
\midrule

\multicolumn{8}{c}{\textbf{constrained}} \\
\midrule

C03944-0234160
& 3 & 118
& 4 & 82
& 3 & \green{0.184}  & 3.2e-07\\

C07158-0421147
& 3 & 427
& 4 & 366
& 3 & \green{0.389}  & 8.5e-06\\

C11045-0650493
& 3 & 497
& 4 & 600
& 3 & \green{0.621}  & 2.0e-06\\

C34280-1975791
& 3 & 1,876
& 4 & 2,885
& 4 & \green{3.09}  & 6.6e-07\\

C40533-2178351
& 4 & 3,961
& 4 & 3.060
& 4 & \green{3.3}  & \texttt{EXACT}\\

C47200-2644766
& 4 & 9,627
& 4 & 8,874
& 4 & \green{4.16}  & 1.4e-06\\

C59813-3242787
& 3 & 10,495
& - & N/A$^{\ast}$
& 4 & \green{5.09}  & 1.4e-06\\

C71925-3762114
& 4 & 18,637
& 4 & 12,834
& 4 & \green{5.77}  & 2.7e-07\\

C91155-5035313
& 4 & 27,119
& - & N/A$^{\ast}$
& 4 & \green{9.43}  & 1.2e-08\\

\midrule
\multicolumn{8}{c}{\textbf{unconstrained}} \\
\midrule

U03944-0234160
& 3 & 33.7
& 3 & 7.47
& 3 & \green{0.051}  & \texttt{EXACT}\\

U07158-0421147
& 3 & 63
& 3 & 16.7
& 3 & \green{0.102}  & \texttt{EXACT}\\

U11045-0650493
& 3 & 95
& 3 & 22.6
& 3 & \green{0.151}  & \texttt{EXACT}\\

U34280-1975791
& 4 & 318
& 4 & 85
& 4 & \green{0.534}  & \texttt{EXACT}\\

U40533-2178351
& 4 & 324
& 4 & 90
& 4 & \green{0.614}  & \texttt{EXACT}\\

U47200-2644766
& 3 & 438
& 3 & 129
& 3 & \green{0.731}  & \texttt{EXACT}\\

U59813-3242787
& 4 & 481
& 4 & 135
& 4 & \green{0.913}  & \texttt{EXACT}\\

U71925-3762114
& 4 & 890
& 4 & 277
& 4 & \green{1.01}  & \texttt{EXACT}\\

U91155-5035313
& 4 & 1,211
& 4 & 406
& 4 & \green{1.48}  & \texttt{EXACT}\\
\bottomrule
\end{tabular}

\begin{tablenotes}
\footnotesize
\item[$\ast$] Run exceeded the predefined computational limits under the experimental environment.
\end{tablenotes}
\end{threeparttable}
\end{table}

\begin{figure}[t]
  \centering
  \includegraphics[width=0.95\linewidth]{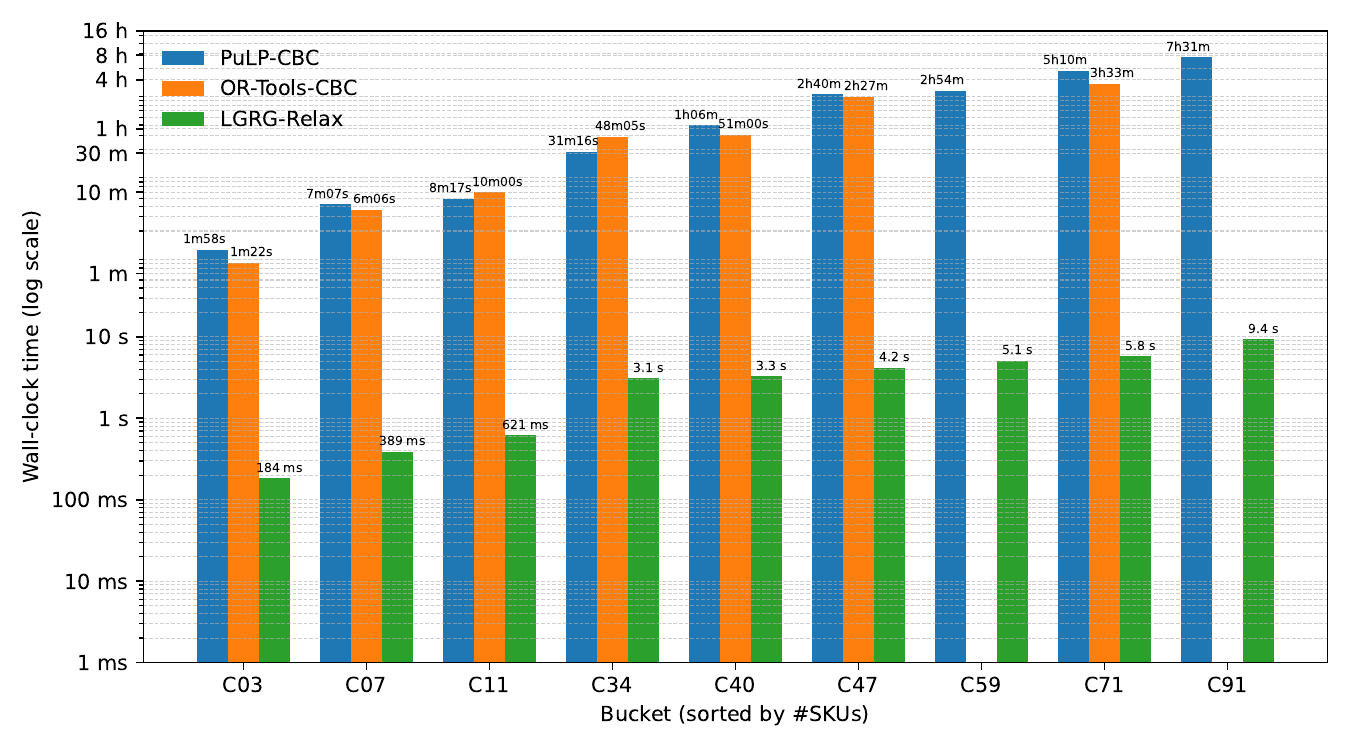}
  \caption{Wall-clock time across solvers in the constrained regime (log scale)}
  \label{fig:constrained_results}
\end{figure}

\paragraph{Baseline Results}
The constrained part of Table~\ref{tab:opt_results} summarizes the bucket-level optimization baseline outcomes across all solution
approaches under the primary in-stock configurations described in Section~\ref{subsec: config}. For each bucket and solver, we report the number of SKUs and simulated scenarios, the number of Dinkelbach iterations required for convergence, the achieved GMROI, the realized in-stock percentage (ISP), and the overall wall-clock time. Across all buckets, the proposed Lagrangian-based method attains GMROI values that are numerically indistinguishable from those obtained by exact integer solvers, while consistently satisfying the prescribed service-level requirements. The maximum observed deviation in GMROI relative to the \texttt{PuLP} baseline is on or under the order of $10^{-5}$, which is negligible at production scale. 

In addition to solution quality, substantial differences are observed in computational efficiency. As illustrated in Figure~\ref{fig:constrained_results}, the wall-clock time required by the Lagrangian relaxation method is orders of magnitude smaller than that of the exact integer solvers on larger buckets. These performance differences become increasingly pronounced as problem size grows, motivating the detailed computational analysis presented in the next subsection.

\paragraph{Unconstrained Regime}

\begin{figure}[ht]
  \centering
  \includegraphics[width=0.95\linewidth]{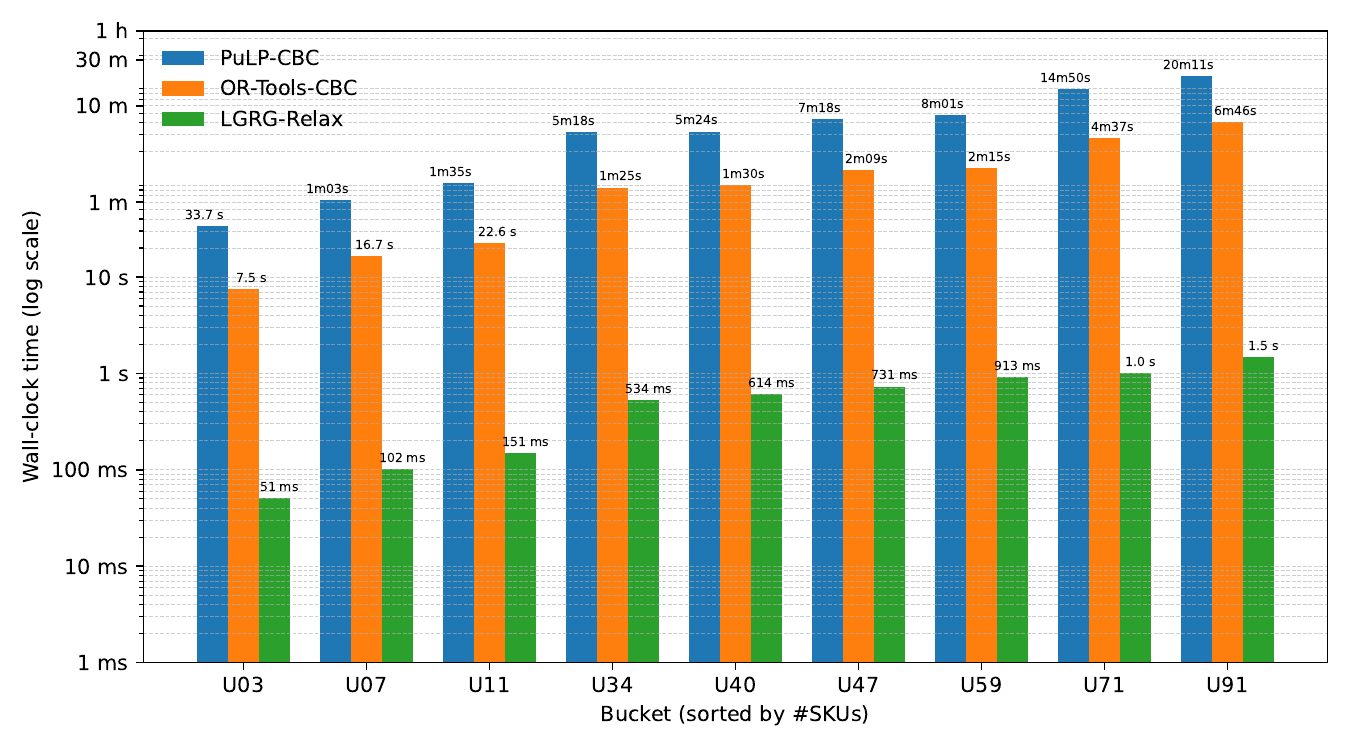}
  \caption{Wall-clock time across solvers in the unconstrained regime (log scale)}
  \label{fig:unconstrained_results}
\end{figure}

In the uniformly non-binding regime described in Section~\ref{subsec: unconstrained reg}, the optimization problem reduces to an unconstrained GMROI maximization. As reported in
the unconstrained part of Table~\ref{tab:opt_results} and shown in Figure~\ref{fig:unconstrained_results}, all solvers benefit from the removal of the aggregate coupling constraint, exhibiting substantially reduced solution times relative to their constrained counterparts, while
producing numerically identical GMROI-optimal selections, and among all solvers, the Lagrangian-based approach still achieves the lowest wall-clock times.

\paragraph{Effect of Constraint Tightness}
Figure~\ref{fig:tightness_scaling} examines the effect of constraint tightness using a medium-sized bucket (C11/U11) as a representative case. The numeric annotations shown next to each data point indicate the number of Dinkelbach iterations required for convergence under the corresponding constraint tightness. For confidentiality, constraint tightness is reported on a normalized relative scale, which preserves the relative ordering of constraint severity while abstracting away absolute service-level values. The in-stock requirement is varied across its empirically feasible range on a normalized relative scale, from an effectively unconstrained regime to a strongly binding upper limit (near 100\%). As the constraint is tightened, the realized GMROI declines relative to the unconstrained optimum. This loss is initially modest but becomes increasingly pronounced as the constraint approaches its most restrictive regime, highlighting the increasing marginal economic cost associated with more restrictive service-level requirements.

Each subplot in Figure~\ref{fig:tightness_scaling} illustrates the relationship between constraint tightness and runtime for a specific solution approach. For both the Lagrangian relaxation and the \texttt{PuLP} solver, runtime exhibits a clearly non-monotonic pattern: the longest solution times occur at intermediate levels of constraint tightness, while both the unconstrained regime and the fully binding regime are comparatively easier to solve. This behavior is consistent with the discrete nature of the problem. In the unconstrained regime, the service constraint is inactive and the problem simplifies structurally, while near the binding upper limit the feasible region becomes highly restricted, often yielding fewer admissible solutions. In contrast, intermediate targets induce the greatest combinatorial ambiguity, with multiple near-feasible and near-optimal selections competing, which is also reflected in a higher number of Dinkelbach iterations (e.g., 4 iterations in the mid-range compared with 2-3 near the extremes).

Comparing the two panels of Figure~\ref{fig:tightness_scaling} further highlights the computational advantage of the proposed Lagrangian relaxation. Across all levels of constraint tightness, the Lagrangian-based method is consistently and substantially faster than the exact \texttt{PuLP} solver, often by several orders of magnitude in wall-clock time. Notably, this speedup is achieved without sacrificing solution quality, as both solvers exhibit nearly identical relative GMROI trends across the entire tightness spectrum.

\begin{figure}[t]
  \centering
  \includegraphics[width=0.99\linewidth]{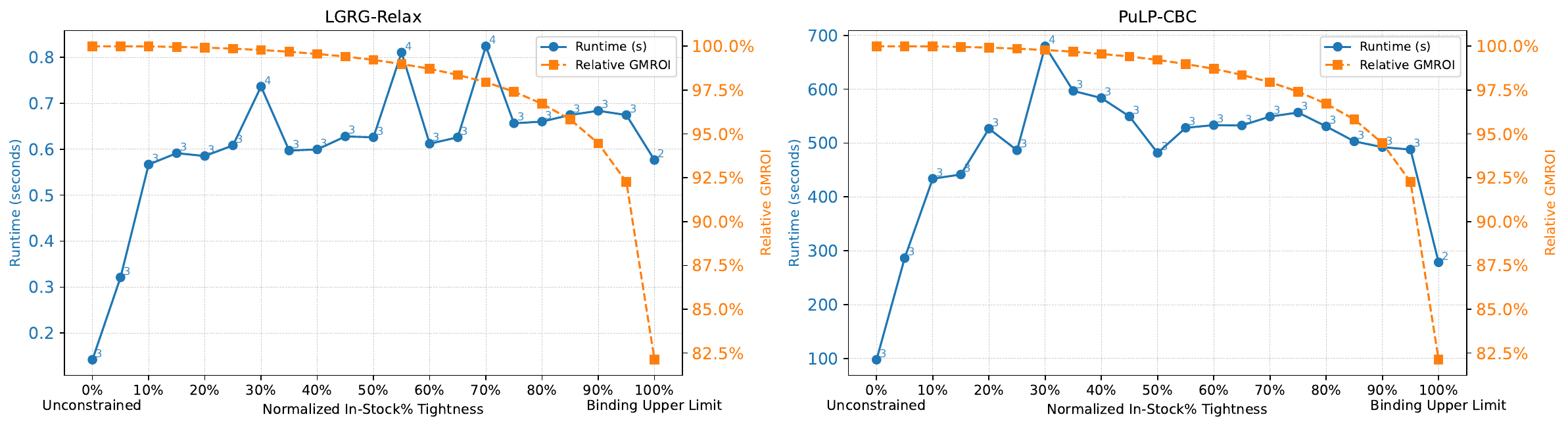}
  \caption{Runtime and Relative GMROI vs Normalized In-Stock Tightness}
  \label{fig:tightness_scaling}
\end{figure}

\subsection{Computational performance}
The computational efficiency of the proposed Lagrangian--Dinkelbach framework is primarily driven by its structural decomposition of the inner optimization problem. Solving the BIP subproblem directly requires global branch-and-bound over a large discrete decision space, which is computationally expensive.

The Lagrangian relaxation removes the bucket-level service constraint and decomposes the subproblem into independent SKU-level decisions for a fixed multiplier. As a result, each inner iteration reduces to a set of simple local maximization operations rather than a combinatorial search. Moreover, the Lagrangian multiplier is one-dimensional and can be efficiently updated via binary search, while the Dinkelbach outer loop converges in a small number of iterations. Together, these properties significantly reduce the overall computational burden while preserving solution quality.

\paragraph{Remark on Duality Gap in the Integer Setting}
\label{remark on dual gap}
Algorithm~\ref{alg:lagrangian} relies on a Lagrangian relaxation of the bucket-level service constraint and is exact for the associated continuous relaxation. When the decision variables are restricted to multiple-choice binary selections, a nonzero duality gap may in principle arise because the service-level constraint couples otherwise separable SKU-level decisions. Such linking constraints are
well known to induce duality gaps in integer programs.

Consistent with this theory, the Lagrangian solution is not guaranteed to be globally optimal for the original binary integer program. However, the numerical experiments demonstrate that the resulting gap is negligible in practice: across all buckets, the Lagrangian method attains GMROI values that are numerically indistinguishable from those produced by exact integer optimization solvers, while satisfying the ISP constraints. This behavior suggests that, for the bucket structures encountered in
production, the Lagrangian relaxation is extremely tight and provides an effective approximation to the true integer optimum with substantial computational savings.

\section{Conclusion}
\label{sec:conclusion}
This paper proposes a scalable simulation--optimization framework for bucket-level GMROI maximization with discrete SKU decisions under a global service performance constraint (operationalized as an in-stock percentage requirement). By combining stochastic simulation with a Dinkelbach-based fractional programming formulation, the approach enables profitability-oriented decision-making while explicitly accounting for service performance requirements. The proposed Lagrangian relaxation further exploits the separable structure of the problem, yielding substantial computational gains without compromising solution quality.

Beyond demonstrating algorithmic efficiency, our results provide managerial insight into the role of constraint tightness. By systematically varying the in-stock percentage (ISP) within its empirically feasible range, we show that increasingly stringent ISP requirements lead to a growing degradation in realized GMROI. While moderate ISP targets impose only limited economic penalties, aggressively high ISP requirements incur disproportionately large profitability losses. This finding underscores the necessity for supply chain practitioners to carefully balance service ambitions against financial returns, rather than treating service targets, implemented through ISP requirements, as exogenous or cost-free design parameters.

Taken together, the results illustrate both the operational value and the practical relevance of the proposed framework. From an optimization perspective, the method offers a tractable and scalable solution to large-scale, service-constrained fractional programs. From a supply chain management perspective, it provides a quantitative basis for evaluating service--profit trade-offs, supporting informed and economically grounded inventory decisions in complex, data-driven enterprise environments.

\section*{Acknowledgments}
This research was conducted using data, resources, and project support provided by an industry partner. The authors sincerely thank the partner for enabling this study and for the insights and collaboration that made this work possible.


\appendix
\renewcommand{\thesection}{\Alph{section}}

\titleformat{\section}
  {\normalfont\Large\bfseries} 
  {Appendix \thesection}       
  {1em}                        
  {}                           
  []

\renewcommand{\thetheorem}{\Alph{section}.\arabic{theorem}}
\renewcommand{\theequation}{\Alph{section}.\arabic{equation}}
\renewcommand{\thefigure}{\Alph{section}.\arabic{figure}}

\section{Convergence and Finite-Step Termination Proofs}
\label{appendix:convergence}

\begin{theorem}[General Convergence of Dinkelbach's Algorithm]
\label{thm:convergence}
Consider the fractional programming problem
\[
\max_{\mathbf{x} \in \mathcal{D}} 
F(\mathbf{x}) = \frac{M(\mathbf{x})}{I(\mathbf{x})},
\]
where \(\mathcal{D}\) is compact and nonempty, and $I(\mathbf{x})$ satisfies $I(\mathbf{x})>0$ for all $\mathbf{x}\in\mathcal{D}$. Assume further that $I$ is lower semicontinuous on $\mathcal{D}$.
Define
\[
\Phi(\lambda) = \max_{\mathbf{x} \in \mathcal{D}} 
\{ M(\mathbf{x}) - \lambda I(\mathbf{x}) \},
\qquad
\lambda^\star = \max_{\mathbf{x} \in \mathcal{D}} 
\frac{M(\mathbf{x})}{I(\mathbf{x})}.
\]
If each subproblem of the Dinkelbach iteration
\[
\mathbf{x}_k \in 
\arg\max_{\mathbf{x} \in \mathcal{D}} 
\{ M(\mathbf{x}) - \lambda_k I(\mathbf{x}) \},
\qquad
\lambda_{k+1} = \frac{M(\mathbf{x}_k)}{I(\mathbf{x}_k)},
\]
is solved exactly, then the sequence \(\{\lambda_k\}\) satisfies:
\begin{enumerate}
\item[\textnormal{(a)}] \(\{\lambda_k\}\) is monotonically increasing and bounded above by \(\lambda^\star\);
\item[\textnormal{(b)}] \(\lambda_k \to \lambda^\star\) as \(k \to \infty\);
\item[\textnormal{(c)}] \(\mathbf{x}_k \to \mathbf{x}^\star \in \arg\max_{\mathbf{x}\in \mathcal{D}} F(\mathbf{x})\).
\end{enumerate}
\end{theorem}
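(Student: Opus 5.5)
The plan is to follow the classical Dinkelbach argument. Everything rests on two identities for the parametric function $\Phi$. Fix an optimizer $\mathbf{x}^\star\in\arg\max_{\mathcal{D}}F$, so that $M(\mathbf{x}^\star)=\lambda^\star I(\mathbf{x}^\star)$.

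\emph{Step 0 (well-posedness).} I first record what compactness and semicontinuity give. Since $I$ is lower semicontinuous and positive on the compact set $\mathcal{D}$, it attains its minimum, so $\underline{I}:=\min_{\mathcal{D}} I>0$. For the remaining conclusions I also need $\overline{I}:=\sup_{\mathcal{D}}I<\infty$ and the existence of the maxima defining $\Phi$ and $\lambda^\star$. The hypotheses as stated do not quite guarantee these, so I will make them explicit: I assume $M$ is upper semicontinuous and $I$ is bounded on $\mathcal{D}$. Both hold trivially in our setting, where $\mathcal{D}$ is finite.

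\emph{Step 1 (properties of $\Phi$).} $\Phi$ is a pointwise maximum of affine functions of $\lambda$, so it is convex. For $\lambda<\mu$, take $\mathbf{x}$ attaining $\Phi(\mu)$; then
\[
\Phi(\lambda)\ge M(\mathbf{x})-\lambda I(\mathbf{x})\ge \Phi(\mu)+(\mu-\lambda)\underline{I},
\]
so $\Phi$ is strictly decreasing. Moreover $\Phi(\lambda^\star)=0$: we have $M-\lambda^\star I\le 0$ on $\mathcal{D}$, with equality at $\mathbf{x}^\star$. In particular $\Phi(\lambda)\ge 0$ for every $\lambda\le\lambda^\star$.

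\emph{Step 2 (monotonicity, part (a)).} I proceed by induction, starting from $\lambda_0=0$. If $\lambda_0>\lambda^\star$ (possible when $\lambda^\star<0$), I instead start from $\lambda_1=F(\mathbf{x}_0)\le\lambda^\star$. Every update satisfies $\lambda_{k+1}=F(\mathbf{x}_k)\le\lambda^\star$ because $\mathbf{x}_k\in\mathcal{D}$. The key identity is
\[
\Phi(\lambda_k)=M(\mathbf{x}_k)-\lambda_k I(\mathbf{x}_k)=I(\mathbf{x}_k)\,(\lambda_{k+1}-\lambda_k).
\]
Since $\lambda_k\le\lambda^\star$ gives $\Phi(\lambda_k)\ge0$, and $I(\mathbf{x}_k)>0$, this yields $\lambda_{k+1}\ge\lambda_k$.

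\emph{Step 3 (convergence of values, part (b)).} I compare the key identity with the lower bound obtained by testing $\Phi(\lambda_k)$ at $\mathbf{x}^\star$:
\[
I(\mathbf{x}_k)(\lambda_{k+1}-\lambda_k)=\Phi(\lambda_k)\ge M(\mathbf{x}^\star)-\lambda_k I(\mathbf{x}^\star)=I(\mathbf{x}^\star)(\lambda^\star-\lambda_k).
\]
This gives
\[
0\le\lambda^\star-\lambda_k\le \frac{\overline{I}}{I(\mathbf{x}^\star)}\,(\lambda_{k+1}-\lambda_k).
\]
The sequence is monotone and bounded by (a), hence convergent, so the increments $\lambda_{k+1}-\lambda_k$ tend to zero. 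Therefore $\lambda_k\to\lambda^\star$. Rearranging the same inequality also gives a contraction-type bound, which shows the convergence is at least linear.

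\emph{Step 4 (convergence of iterates, part (c)).} This is the main obstacle, because in the general compact setting $\arg\max F$ need not be a singleton. The sequence $\mathbf{x}_k$ need therefore not converge, and I expect only subsequential convergence can be proved in full generality. My plan is as follows. First, compactness gives a subsequence $\mathbf{x}_{k_l}\to\bar{\mathbf{x}}\in\mathcal{D}$. Second, upper semicontinuity of $M$ and lower semicontinuity of $I$ give $F(\bar{\mathbf{x}})\ge\limsup_l F(\mathbf{x}_{k_l})=\lim_l\lambda_{k_l+1}=\lambda^\star$, so $\bar{\mathbf{x}}$ is optimal. Third, I conclude that every limit point of $\{\mathbf{x}_k\}$ lies in $\arg\max F$, and that the whole sequence converges when the maximizer is unique. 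For the finite $\mathcal{D}$ of problem (FP), the statement holds in a stronger form, which is the form we actually need. The values $F(\mathbf{x}_k)$ take finitely many values and are nondecreasing, and they converge to $\lambda^\star$. Hence $F(\mathbf{x}_k)=\lambda^\star$ for all large $k$, so $\mathbf{x}_k$ is exactly optimal after finitely many steps. This prepares the finite-termination result that follows.
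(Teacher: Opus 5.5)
Your proof is correct in substance and in several places more careful than the paper's. It follows the paper for (a) but takes a different route for (b) and (c).

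\textbf{Part (a).} You use the same identity $\Phi(\lambda_k)=I(\mathbf{x}_k)(\lambda_{k+1}-\lambda_k)$ as the paper. You also handle the case $\lambda_0>\lambda^\star$, which the paper ignores.

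\textbf{Part (b).} The paper lets $k\to\infty$ in that identity, uses continuity of $\Phi$ to get $\Phi(\bar\lambda)=0$, and concludes by uniqueness of the zero of $\Phi$. You instead test $\Phi(\lambda_k)$ at $\mathbf{x}^\star$ and obtain the sandwich $0\le\lambda^\star-\lambda_k\le(\overline{I}/I(\mathbf{x}^\star))(\lambda_{k+1}-\lambda_k)$. This needs neither continuity nor root uniqueness, and it gives an explicit linear rate. Both routes need an upper bound on $I(\mathbf{x}_k)$. The paper attributes this to lower semicontinuity, which only gives $\min_{\mathcal{D}}I>0$, so you were right to flag it.

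You do not, however, need to assume $\sup_{\mathcal{D}}I<\infty$. Add the optimality of $\mathbf{x}_k$ at $\lambda_k$ (tested against $\mathbf{x}_{k+1}$) to the optimality of $\mathbf{x}_{k+1}$ at $\lambda_{k+1}$ (tested against $\mathbf{x}_k$). This gives
\[
(\lambda_{k+1}-\lambda_k)\,(I(\mathbf{x}_k)-I(\mathbf{x}_{k+1}))\ge 0 .
\]
So $I(\mathbf{x}_k)$ is non-increasing as long as $\lambda_k$ strictly increases. If instead $\lambda_{k+1}=\lambda_k$, then $\Phi(\lambda_k)=0$, i.e.\ $\lambda_k=\lambda^\star$ already. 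Replacing $\overline{I}$ by $I(\mathbf{x}_0)$ (or $I(\mathbf{x}_1)$ when $\lambda^\star<0$), your Steps 2--3 prove (a) and (b) under exactly the stated hypotheses.

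\textbf{Part (c).} Your caution is warranted. The paper's closing sentence (``Consequently, $\mathbf{x}_k$ converges \dots'') does not follow from (b), and (c) is false as stated even for continuous, bounded data. Take $\mathcal{D}=[-1,1]$, $M(x)=|x|$ and $I(x)=x^2+\tfrac14$. Then $\lambda^\star=1$ is attained at $x=\pm\tfrac12$. For $k\ge1$ the subproblem maximizers are $\pm1/(2\lambda_k)$, and $\lambda_{k+1}=2\lambda_k/(1+\lambda_k^2)<1$. Alternating the sign therefore produces a valid iterate sequence that never converges. Your subsequential version, together with the finite-$\mathcal{D}$ argument, is the right target, and the finite case is all the finite-termination corollary uses.

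One step of Step 4 needs a sign caveat. The inference ``$M$ upper and $I$ lower semicontinuous $\Rightarrow F(\bar{\mathbf{x}})\ge\limsup_l F(\mathbf{x}_{k_l})$'' is valid when $M\ge0$ but not in general. Take $\mathcal{D}=[0,1]$, $M\equiv-1$, $I(0)=\tfrac12$ and $I\equiv1$ on $(0,1]$, with $\mathbf{x}_0=1$. Then $\lambda_k=\lambda^\star=-1$ for $k\ge1$, and every point of $(0,1]$ solves the subproblem at $\lambda^\star$. So $\mathbf{x}_k=1/k$ is admissible and converges to the non-optimal point $0$.

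The fix is to argue with the upper semicontinuous function $M-\lambda^\star I$. Its values at the iterates are $(\lambda_{k+1}-\lambda^\star)I(\mathbf{x}_k)$, which tend to $0$ by the bound on $I(\mathbf{x}_k)$ above. This repairs the step whenever $\lambda^\star\ge0$, which covers (FP) since $M_{ij}\ge0$.
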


\begin{proof}
From the iteration rule,
\[
\lambda_{k+1} - \lambda_k
= \frac{M(\mathbf{x}_k)}{I(\mathbf{x}_k)} - \lambda_k
= \frac{\Phi(\lambda_k)}{I(\mathbf{x}_k)}.
\]
For each \(\lambda \in \mathbb{R}\), the function
\[
\Phi(\lambda) = \max_{\mathbf{x} \in \mathcal{D}} 
\{ M(\mathbf{x}) - \lambda I(\mathbf{x}) \}
\]
is continuous and strictly decreasing, as it is the pointwise maximum of affine functions in \(\lambda\).

Therefore, whenever $\Phi(\lambda_k) > 0 = \Phi(\lambda^\star)$, we have $\lambda_{k+1} > \lambda_k$; thus the sequence $\{\lambda_k\}$ is monotonically increasing.
Since $F(\mathbf{x}) \le \lambda^\star$ for all $\mathbf{x}\in \mathcal{D}$,
we have $\lambda_{k+1} \leq \lambda^\star$,
so the sequence is bounded above and hence convergent to some \(\bar{\lambda}\le\lambda^\star\).
Taking limits and using continuity of \(\Phi\). Since $I > 0$ and lower semicontinuous,
\[
0 = \lim_{k\to\infty} (\lambda_{k+1} - \lambda_k)
   = \lim_{k\to\infty} \frac{\Phi(\lambda_k)}{I(\mathbf{x}_k)}
   \implies \Phi(\bar{\lambda}) = 0.
\]
By the uniqueness of the zero of \(\Phi\), it follows that \(\bar{\lambda} = \lambda^\star\).
Consequently, \(\mathbf{x}_k\) converges to an optimal solution 
\(\mathbf{x}^\star \in \arg\max_{\mathbf{x}\in\mathcal{D}} F(\mathbf{x})\).
\end{proof}

\bigskip

\begin{corollary}[Finite-Step Termination for Discrete Feasible Sets]
\label{thm:finite-step}
Suppose the feasible set \(\mathcal{D}\) in Theorem~\ref{thm:convergence} is finite and \(I(\mathbf{x}) > 0\) for all \(\mathbf{x} \in \mathcal{D}\).
Let
\[
\mathcal{R} = 
\Big\{ \frac{M(\mathbf{x})}{I(\mathbf{x})} : \mathbf{x} \in \mathcal{D} \Big\},
\qquad
\lambda^\star = \max \mathcal{R},
\]
then the Dinkelbach algorithm terminates in a finite number of iterations,
returning a globally optimal pair 
\((\mathbf{x}^\star, \lambda^\star)\) satisfying 
\(F(\mathbf{x}^\star) = \lambda^\star\).
If \(|\mathcal{R}| = r\), at most \(r\) iterations are required.
\end{corollary}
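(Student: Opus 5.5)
The plan is to show that, on a finite set, the Dinkelbach sequence $\{\lambda_k\}$ strictly increases through the finite set $\mathcal{R}$ until it reaches $\lambda^\star$, and then stops. I will treat the stopping rule as the exact one, $\Phi(\lambda_k)\le 0$, i.e.\ $\epsilon\to 0$.

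First I record the basic facts about $\Phi$ on a finite $\mathcal{D}$. Here $\Phi$ is a maximum of finitely many affine functions $\lambda\mapsto M(\mathbf{x})-\lambda I(\mathbf{x})$, each with slope $-I(\mathbf{x})<0$. So $\Phi$ is continuous, piecewise linear and strictly decreasing. Moreover:
\begin{itemize}
\item $\Phi(\lambda)\ge M(\mathbf{x})-\lambda I(\mathbf{x})$ for every $\mathbf{x}$, which gives $\Phi(\lambda)>0$ whenever $\lambda<\lambda^\star$ (take $\mathbf{x}$ optimal for $F$).
\item $\Phi(\lambda^\star)=0$, because $M(\mathbf{x})-\lambda^\star I(\mathbf{x})=I(\mathbf{x})(F(\mathbf{x})-\lambda^\star)\le 0$, with equality at a maximizer of $F$.
\end{itemize}
Hence $\lambda^\star$ is the unique zero of $\Phi$.

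Next comes the monotonicity step, which follows part (a) of Theorem~\ref{thm:convergence}. For every $k\ge1$, $\lambda_k=F(\mathbf{x}_{k-1})\in\mathcal{R}$, so $\lambda_k\le\lambda^\star$. The starting value $\lambda_0=0$ need not lie in $\mathcal{R}$, and I treat it separately. Suppose the algorithm has not stopped at step $k$, so $\Phi(\lambda_k)>0$. Then
\[
\lambda_{k+1}-\lambda_k=\frac{M(\mathbf{x}_k)-\lambda_k I(\mathbf{x}_k)}{I(\mathbf{x}_k)}=\frac{\Phi(\lambda_k)}{I(\mathbf{x}_k)}>0,
\]
using exactness of the subproblem and $I(\mathbf{x}_k)>0$.

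The third step is the counting. The values $\lambda_1<\lambda_2<\cdots$ are strictly increasing and all lie in the finite set $\mathcal{R}$, so they are pairwise distinct. There can therefore be at most $r$ of them, and the algorithm must reach some $k$ with $\Phi(\lambda_k)\le 0$. Since $\lambda_k\le\lambda^\star$ and $\Phi$ is strictly decreasing with $\Phi(\lambda^\star)=0$, we get $\Phi(\lambda_k)\ge 0$, and hence $\Phi(\lambda_k)=0$ and $\lambda_k=\lambda^\star$. At that point $\mathbf{x}_k$ satisfies $M(\mathbf{x}_k)-\lambda^\star I(\mathbf{x}_k)=0$, so $F(\mathbf{x}_k)=\lambda^\star$ and $\mathbf{x}_k$ is globally optimal.

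One edge case must be handled: if $\lambda^\star\le 0=\lambda_0$, termination already happens at $k=0$. In that case the maximizer $\mathbf{x}_0$ satisfies $M(\mathbf{x}_0)=0$ (since $M\ge0$ forces $\lambda^\star=0$), so it is again optimal.

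The main obstacle is pinning the exact iteration bound, since off-by-one issues arise from $\lambda_0\notin\mathcal{R}$. To settle it, I will count the subproblem solves at $\lambda_1,\dots,\lambda_m$, which are distinct elements of $\mathcal{R}$, so $m\le r$. The solve at $\lambda_0$ produces $\lambda_1$, and the final solve, at $\lambda_m=\lambda^\star$, only certifies optimality. Under this convention the number of update iterations is at most $r$. If needed, I will sharpen the argument by noting that $\lambda_{k+1}>\lambda_k$ skips every element of $\mathcal{R}$ between them, but this is not required for the stated bound.
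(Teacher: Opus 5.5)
Your proof is correct and is essentially the paper's argument. For $k\ge 1$ the iterates $\lambda_k=F(\mathbf{x}_{k-1})$ lie in the finite set $\mathcal{R}$ and strictly increase while $\Phi(\lambda_k)>0$, so after at most $r$ updates the algorithm stops with $\lambda_k=\lambda^\star$ and $F(\mathbf{x}_k)=\lambda^\star$; you are more careful than the paper, which claims $\lambda_k\in\mathcal{R}$ for every $k$ (false for $\lambda_0=0$ unless some feasible $\mathbf{x}$ has $M(\mathbf{x})=0$) and identifies the terminal value via the limit in Theorem~\ref{thm:convergence}(b), not via the stopping rule and the unique zero of $\Phi$ as you do.
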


\begin{proof}
Each \(\lambda_k\) equals the ratio \(M(\mathbf{x}_k)/I(\mathbf{x}_k)\)
for some feasible \(\mathbf{x}_k\), so \(\lambda_k \in \mathcal{R}\). Because $\mathcal{D}$ is finite, the set \(\mathcal{R}\) is also finite. According to Theorem~\ref{thm:convergence}, $\{\lambda_k\}$ is a monotonically increasing series and converges to $\lambda^{\star}$. Hence the sequence forms a strictly increasing chain of distinct elements
in the finite set \(\mathcal{R}\),
and thus must reach the maximal element \(\lambda^\star\) 
in at most $r$ steps.
\end{proof}




\bibliographystyle{elsarticle-num-names}
\bibliography{sample}

@article{avci2017multi,
  title={A Multi-objective, simulation-based optimization framework for supply chains with premium freights},
  author={Avci, Mualla Gonca and Selim, Hasan},
  journal={Expert Systems with Applications},
  volume={67},
  pages={95--106},
  year={2017},
  publisher={Elsevier}
}

@article{bertsimas2006robust,
  title={A robust optimization approach to inventory theory},
  author={Bertsimas, Dimitris and Thiele, Aur{\'e}lie},
  journal={Operations research},
  volume={54},
  number={1},
  pages={150--168},
  year={2006},
  publisher={INFORMS}
}

@article{charnes1962programming,
  title={Programming with linear fractional functionals},
  author={Charnes, Abraham and Cooper, William W},
  journal={Naval Research logistics quarterly},
  volume={9},
  number={3-4},
  pages={181--186},
  year={1962},
  publisher={Wiley Online Library}
}

@misc{cpsatlp,
  title        = {CP-SAT},
  version      = {v9.12},
  author       = {Laurent Perron and Fr\'ed\'eric Didier},
  organization = {Google},
  url          = {https://developers.google.com/optimization/cp/cp_solver/},
  year         = {2025}
}

@article{cruz2024integrated,
  title={An integrated production planning and inventory management problem for a perishable product: optimization and Monte Carlo simulation as a tool for planning in scenarios with uncertain demands},
  author={Cruz, Jeferson Auto da and Salles-Neto, Luiz Leduino de and Schenekemberg, Cleder Marcos},
  journal={Top},
  volume={32},
  number={2},
  pages={263--303},
  year={2024},
  publisher={Springer}
}

@article{dinkelbach1967nonlinear,
  title={On nonlinear fractional programming},
  author={Dinkelbach, Werner},
  journal={Management science},
  volume={13},
  number={7},
  pages={492--498},
  year={1967},
  publisher={INFORMS}
}

@article{do2022metamodel,
  title={Metamodel-based simulation optimization: A systematic literature review},
  author={do Amaral, Jo{\~a}o Victor Soares and Montevechi, Jos{\'e} Arnaldo Barra and de Carvalho Miranda, Rafael and de Sousa Junior, Wilson Trigueiro},
  journal={Simulation Modelling Practice and Theory},
  volume={114},
  pages={102403},
  year={2022},
  publisher={Elsevier}
}

@article{fisher1981lagrangian,
  title={The Lagrangian relaxation method for solving integer programming problems},
  author={Fisher, Marshall L},
  journal={Management science},
  volume={27},
  number={1},
  pages={1--18},
  year={1981},
  publisher={INFORMS}
}

@book{gokce2002optimization,
  title={Optimization of sourcing decisions in supply chains},
  author={Gokce, Mahmut Ali},
  year={2002},
  publisher={North Carolina State University}
}

@article{gonccalves2020operations,
  title={Operations research models and methods for safety stock determination: A review},
  author={Gon{\c{c}}alves, Jo{\~a}o NC and Carvalho, M Sameiro and Cortez, Paulo},
  journal={Operations research perspectives},
  volume={7},
  pages={100164},
  year={2020},
  publisher={Elsevier}
}

@incollection{hong2021surrogate,
  title={Surrogate-based simulation optimization},
  author={Hong, L Jeff and Zhang, Xiaowei},
  booktitle={Tutorials in Operations Research: Emerging Optimization Methods and Modeling Techniques with Applications},
  pages={287--311},
  year={2021},
  publisher={INFORMS}
}

@article{maitra2024inventory,
  title={Inventory Management Under Stochastic Demand: A Simulation-Optimization Approach},
  author={Maitra, Sarit},
  journal={arXiv preprint arXiv:2406.19425},
  year={2024}
}

@article{miller2022optimization,
  title={On the optimization of benefit to cost ratios for public sector decision making},
  author={Miller, Frederick and Kaya, Yaren Bilge and Dimas, Geri L and Konrad, Renata and Maass, Kayse Lee and Trapp, Andrew C and others},
  journal={arXiv preprint arXiv:2212.04534},
  year={2022}
}

@article{Mitchell2011PuLPAL,
  title={PuLP: A Linear Programming Toolkit for Python},
  author={Stuart Mitchell},
  journal={Optimization Online},
  year={2011},
  url={https://optimization-online.org/wp-content/uploads/2011/09/3178.pdf}
}

@article{ogura2022bayesian,
  title={Bayesian optimization methods for inventory control with agent-based supply-chain simulator},
  author={Ogura, Takahiro and Wang, Haiyan and Wang, Qiyao and Kiuchi, Atsuki and Gupta, Chetan and Uchihira, Naoshi},
  journal={IEICE Transactions on Fundamentals of Electronics, Communications and Computer Sciences},
  volume={105},
  number={9},
  pages={1348--1357},
  year={2022},
  publisher={The Institute of Electronics, Information and Communication Engineers}
}

@article{pitty2008decision,
  title={Decision support for integrated refinery supply chains: Part 1. Dynamic simulation},
  author={Pitty, Suresh S and Li, Wenkai and Adhitya, Arief and Srinivasan, Rajagopalan and Karimi, Iftekhar A},
  journal={Computers \& Chemical Engineering},
  volume={32},
  number={11},
  pages={2767--2786},
  year={2008},
  publisher={Elsevier}
}

@article{sharifnia2021robust,
  title={Robust simulation optimization for supply chain problem under uncertainty via neural network metamodeling},
  author={Sharifnia, Seyed Mohammad Ebrahim and Biyouki, Sajjad Amrollahi and Sawhney, Rupy and Hwangbo, Hoon},
  journal={Computers \& Industrial Engineering},
  volume={162},
  pages={107693},
  year={2021},
  publisher={Elsevier}
}

@book{smith2013handbook,
  title={Handbook of stochastic models and analysis of manufacturing system operations},
  author={Smith, J MacGregor and Tan, Bar{\i}{\c{s}}},
  volume={20013},
  year={2013},
  publisher={Springer}
}

@article{tsai2017simulation,
  title={A simulation-based multi-objective optimization framework: A case study on inventory management},
  author={Tsai, Shing Chih and Chen, Sin Ting},
  journal={Omega},
  volume={70},
  pages={148--159},
  year={2017},
  publisher={Elsevier}
}

@book{vandeput2020inventory,
  title={Inventory optimization: Models and simulations},
  author={Vandeput, Nicolas},
  year={2020},
  publisher={Walter de Gruyter GmbH \& Co KG}
}







\end{document}